\newtheorem{theorem}{Theorem}[section]
\newtheorem{lemma}[theorem]{Lemma}
\theoremstyle{definition}
\newtheorem{definition}[theorem]{Definition}
\newtheorem{example}[theorem]{Example}
\theoremstyle{remark}
\numberwithin{equation}{section}
\begin{document}
\setcounter{page}{1}

\title[Frames on Krein Space]{Frames on Krein Spaces}

\author[S. Karmakar, Sk M. Hossein]{Shibashis Karmakar$^1$ and Sk. Monowar Hossein$^2$$^{*}$}

\address{$^{1}$ Department of Mathematics, Jadavpur University, Jadavpur-32, West Bengal, India.}
\email{\textcolor[rgb]{0.00,0.00,0.84}{shibashiskarmakar@gmail.com}}

\address{$^{2}$ Department of Mathematics, Aliah University, Sector-V, Kolkata-91, West Bengal, India.}
\email{\textcolor[rgb]{0.00,0.00,0.84}{sami$\_$milu@yahoo.co.uk}}


\subjclass[2010]{Primary 42C15; Secondary 46C05, 46C20.}

\keywords{Krein Space, Anti-Hilbert space, Majorant topology, Intrinsic Topology, Grammian operator.}

\date{Received: xxxxxx; Revised: yyyyyy; Accepted: zzzzzz.
\newline \indent $^{*}$ Corresponding author}

\begin{abstract}
In this article we define frame for a Krein space $\textbf{\textit{K}}$ with a $J$-orthonormal basis and extend the 
notion of frame sequence and frame potential analogous to Hilbert spaces.We show that every frame is a sum of three orthonormal
bases of a Krein space. We also find relation between frame and orthogonal projections on Krein space.
\end{abstract} \maketitle

\section{Introduction and preliminaries}

Let $\textbf{\emph{H}}$ be a real or complex Hilbert space. A sequence $\{f_n:n\in\mathbb{N}\}$ is said to be a frame for Hilbert space $\textbf{\emph{H}}$ if there exists positive reals $A$ and $B$ s.t. $A\|f\|^2\leq\sum_{n\in\mathbb{N}}|\langle{f},f_n\rangle|^2\leq{B\|f\|^2}$ for all $f\in{\textbf{\emph{H}}}$. Frame for Hilbert spaces was defined by Duffin and Schaeffer \cite{ds} in 1952 to study some problems in nonharmonic Fourier series. But Daubechies et. al.\cite{dgm} published a landmark paper in 1986 in this direction, working on wavelets and signal processing. After the work, the theory of frames begun to be more widely studied. Powerful tools from Operator theory and Banach space theory were being introduced to study Frames in Hilbert spaces and it produced some deep results in Frame theory. 
Many researchers studied frame in different aspects and applications \cite{oc,pgc,pcs,phl}. Krein space theory \cite{jb} is rich 
in application among the many areas of Mathematics. In the year 2011, J. I. Giribet et. al.\cite{gmmm} introduced frame 
in Krein spaces which is known as $J$-frames. But according to their definition, every spanning set of finite dimensional krein space $\textbf{\textit{K}}$ need not be a $J$-frame for $\textbf{\textit{K}}$ 
which is a drawback, as in finite-dimensional Hilbert spaces, each spanning set is a frame. 
So it is desirable to introduce a new definition which will solve the above shortcomings of 
the definition of \cite{gmmm}. Recently K. Esmeral et. al. (\cite{koe}) defined frames in more general 
setting in Krein spaces. According to their definition a countable 
sequence $\{f_n\}_{n\in{\mathbb{N}}}\textmd{ in }{\textbf{\textit{K}}}$ is called a frame for $\textbf{\textit{K}}$, 
if there exist constants $0<A\leq{B}<\infty$ such that $A\|f\|_J^2\leq\sum_{n\in\mathbb{N}}|[{f},f_n]|^2\leq{B\|f\|_J^2}$ 
for all $f\in{\textbf{\textit{K}}}$. But the definition involves fundamental symmetry of the Krein 
space $\textbf{\textit{K}}$ which is not unique. In this paper we have defined frame in Krein spaces which is more suitable 
in application purposes i.e. we can study all the relevant properties of Frames, like Hilbert spaces.
\section{Motivation and Basic DefinitionS}
Let $\textbf{\textit{K}}$ be a Krein space and $\{e_{j}\}_{j=1}^{\infty}$ be a 
countable $J$-orthonormal basis for $\textbf{\textit{K}}$. 
Let $\textbf{\textit{K}}^{+}=\{\overline{span\{e_i\}}:\langle~e_i,e_i\rangle=1\}$ 
and $\textbf{\textit{K}}^{-}=\{\overline{span\{e_i\}}:\langle~e_i,e_i\rangle=-1\}$. 
Then $\textbf{\textit{K}}^{+}$ is a maximal uniformly $J$-positive subspace i.e. a Hilbert space 
and $\textbf{\textit{K}}^{-}$ is a maximal uniformly $J$-negative subspace i.e. an anti-space of a Hilbert space. 
So we have $\textbf{\textit{K}}=\textbf{\textit{K}}^+(\dot{+})\textbf{\textit{K}}^-$ i.e. 
a fundamental decomposition of $\textbf{\textit{K}}$.
Before we define frame for Krein space, we will look the following definition for our motivation.
\begin{example}
 Consider the Krein space $\mathbb{R}^3$ over $\mathbb{R}$, where the inner product is given 
by $\langle~e_1,e_1\rangle=1,\langle~e_2,e_2\rangle=1,\langle~e_3,e_3\rangle=-1,\langle~e_1,e_2\rangle=0,\langle~e_2,e_3\rangle=0,\langle~e_3,e_1\rangle=0$ 
(where $\{e_1,e_2,e_3\}$ be the standard basis for $\mathbb{R}^3$).\\ Now $F=\{(1,0,\frac{1}{\sqrt{2}}),(0,1,\frac{1}{\sqrt{2}}),(0,0,1)\}$ 
is a spanning set of $\mathbb{R}^3$. Hence it is a frame for the associated Hilbert space $\mathbb{R}^3$. But F is not a $J$-frame. 
\end{example}
\begin{example}
 Consider the vector space $\ell^2(\mathbb{N})$ over $\mathbb{R}$. Let $x=(\alpha_1,\alpha_2,\ldots),\\
~y=(\beta_1,\beta_2,\ldots)~\in\ell^2$,\\
we define $\langle{x},y\rangle=\alpha_1\beta_1-\alpha_2\beta_2+\alpha_3\beta_3-\alpha_4\beta_4\ldots$\\
Then $(\ell^2,\langle\cdot\rangle)$ is a Krein space with fundamental symmetry $J_1:\ell^2\rightarrow\ell^2$ defined 
by $J_1(\alpha_1,\alpha_2,\ldots)=(\alpha_1,-\alpha_2,\ldots)$, but we can define another fundamental symmetry $J_2$ 
on $(\ell^2,\langle\cdot\rangle)$ s.t. $J_2:\ell^2\rightarrow\ell^2$ defined 
by $J_1(\alpha_1,\alpha_2,\alpha_3,\alpha_4,\ldots)=(2\alpha_1-\sqrt{3}\alpha_2,\sqrt{3}\alpha_1-2\alpha_2,\alpha_3,-\alpha_4,\ldots)$.
Now according to the definition given by K. Esmeral et. al. \cite{koe}, the sequence $ \{e_n:n\in{\mathbb{N}}\} $ is a 
frame for the triple $(\ell^2,\langle\cdot\rangle,J_1)$ with frame bound $A=B=1$, but if we consider the 
triple $(\ell^2,\langle\cdot\rangle,J_2)$, then the sequence $\{e_n:n\in{\mathbb{N}}\}$ is not a frame with frame 
bound $A=B=1$. So the definition given in \cite{koe} takes away some important geometries of Parseval frame. \\

\end{example}
Motivating by the above example, we define frame in the following manner.
\begin{definition}
 A sequence $\{f_{n}:n\in\mathbb{N}\}\textmd{ in }{\textbf{\textit{K}}}$ is said to be a frame for $\textbf{\textit{K}}$ if $\{f_{n}^+:n\in\mathbb{N}\}$ and $\{f_{n}^-:n\in\mathbb{N}\}$ are frames for $\textbf{\textit{K}}^+$ and $\textbf{\textit{K}}^-$ respectively.
\end{definition}

  In mathematical terms A sequence $\{f_{n}:n\in\mathbb{N}\}\textmd{ in }{\textbf{\textit{K}}}$ is said to be a frame for $\textbf{\textit{K}}$ if the following inequalities are satisfied
\begin{equation}
A_1\langle x^{+},x^{+}\rangle~\leq~\sum_{n\in\mathbb{N}}|\langle x^{+},f_{n}^{+}\rangle|^2~\leq~B_1\langle x^{+},x^{+}\rangle
\end{equation}
\begin{equation}
A_2\langle x^{-},x^{-}\rangle~\leq~\sum_{n\in\mathbb{N}}|\langle x^{-},f_{n}^{-}\rangle|^2~\leq~B_2\langle x^{-},x^{-}\rangle
\end{equation}
where $\langle{\cdot}\rangle$ is the inner product of the Krein space and $B_2\leq~A_2<0<A_1\leq~B_1$ are real numbers. 
Also $x^+,f_n^+\in{\textbf{\textit{K}}}^+~\textmd{and}~x^-,f_n^-\in{\textbf{\textit{K}}}^-$ respectively $\forall~n\in\mathbb{N}$.

The frame theory of an anti-Hilbert space is same as the frame theory of Hilbert space except a change in sign in the inner product of the 
anti-Hilbert space. So in definite inner product space, when we say that a sequence is a frame for that space, then we mean that the sequence 
is a frame for that space in Hilbert space sense.
\begin{definition}
A sequence $\{f_{n}:n\in\mathbb{N}\}\textmd{ in }{\textbf{\textit{K}}}$ is called a tight frame if $B_2=A_2=C(\textmd{say})$ and 
$B_1=A_1=D(\textmd{say})$ with $C=-D$ and moreover if $D=1$, then the frame is called a Parseval frame.
\end{definition}
Before we go further, we give the following example to show that one sequence may be frame in the sense of Krein space 
but not in the sense of Hilbert space.
\begin{example}
 Consider the vector space $\mathbb{R}^2$ over $\mathbb{R}$. Let $\{e_1,e_2\}$ be the standard basis for $\mathbb{R}^2$. 
Define $\langle~e_1,e_1\rangle=1,\langle~e_2,e_2\rangle=-1,~\langle~e_1,e_2\rangle=0$. Then $\mathbb{R}^2$ is a Krein space under the above 
defined inner product. Here $\{(1,1)\}$ is a frame for the Krein space $\mathbb{R}^2$. But $\{(1,1)\}$ is not a frame for the associated Hilbert
 space, because $\{(1,1)\}$ is not a spanning set of $\mathbb{R}^2$. 
\end{example}

\begin{theorem}
Every spanning set in a finite-dimensional Krein space is a frame for that space.
\end{theorem}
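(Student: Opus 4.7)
The plan is to reduce the claim to the well-known Hilbert space fact that every spanning set in a finite-dimensional Hilbert space is a frame, and then to apply it separately on the positive and negative parts of the fundamental decomposition.

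First I would invoke the fundamental decomposition $\textbf{\textit{K}}=\textbf{\textit{K}}^{+}(\dot{+})\textbf{\textit{K}}^{-}$ and the associated $J$-orthogonal projections $P^{\pm}:\textbf{\textit{K}}\to\textbf{\textit{K}}^{\pm}$, so that every vector $f_n$ in the spanning set decomposes uniquely as $f_n=f_n^{+}+f_n^{-}$ with $f_n^{\pm}=P^{\pm}f_n$. By the definition of frame introduced just above the theorem, it suffices to verify that $\{f_n^{+}\}$ is a frame for the Hilbert space $\textbf{\textit{K}}^{+}$ and that $\{f_n^{-}\}$ is a frame for the anti-Hilbert space $\textbf{\textit{K}}^{-}$.

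Next I would show that $\{f_n^{+}\}$ spans $\textbf{\textit{K}}^{+}$ and $\{f_n^{-}\}$ spans $\textbf{\textit{K}}^{-}$. Given $x^{+}\in\textbf{\textit{K}}^{+}\subseteq\textbf{\textit{K}}$, the hypothesis that $\{f_n\}$ spans $\textbf{\textit{K}}$ provides scalars $c_n$ with $x^{+}=\sum_n c_n f_n=\sum_n c_n f_n^{+}+\sum_n c_n f_n^{-}$. Applying the projection $P^{+}$, which is linear and fixes $x^{+}$, yields $x^{+}=\sum_n c_n f_n^{+}$; the same argument with $P^{-}$ handles the negative part. This uses only linearity of the projections and the directness of the fundamental decomposition.

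Finally, since $\textbf{\textit{K}}$ is finite dimensional the subspaces $\textbf{\textit{K}}^{+}$ and $\textbf{\textit{K}}^{-}$ are finite-dimensional Hilbert and anti-Hilbert spaces respectively, and I would appeal to the standard finite-dimensional result that any spanning set in a finite-dimensional Hilbert space is a frame (with explicit bounds obtainable from the smallest and largest eigenvalues of the associated Gram matrix). Using the paper's convention that frame theory on an anti-Hilbert space is the Hilbert theory up to sign, the pair of inequalities (2.1) and (2.2) are then satisfied, so $\{f_n\}$ is a frame for $\textbf{\textit{K}}$.

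The argument is essentially bookkeeping; the only conceptual point that needs care is ensuring that projecting a finite spanning set of $\textbf{\textit{K}}$ onto $\textbf{\textit{K}}^{\pm}$ actually produces a spanning set there. I expect this step—justified above by applying $P^{\pm}$ to the expansion of an arbitrary $x^{\pm}$—to be the only substantive ingredient; once it is in place, the Hilbert space frame theorem in finite dimensions closes the proof immediately.
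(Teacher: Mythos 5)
Your proof is correct, and it follows the same overall strategy as the paper: decompose $\textbf{\textit{K}}=\textbf{\textit{K}}^{+}(\dot{+})\textbf{\textit{K}}^{-}$ and verify the frame inequalities separately on the positive and negative components. The difference lies in how the lower bounds are obtained. The paper re-derives the finite-dimensional Hilbert space result inline: it considers $\phi_1(f^{+})=\sum_{k}|\langle f^{+},f_k^{+}\rangle|^{2}$ on the unit sphere of $\textbf{\textit{K}}^{+}$, sets $A_1=\inf S_1$, and asserts ``it is clear that $A_1>0$.'' That assertion is \emph{not} automatic: the infimum over the compact unit sphere is attained, and it is strictly positive precisely because no unit vector of $\textbf{\textit{K}}^{+}$ is orthogonal to every $f_k^{+}$, i.e.\ because $\{f_k^{+}\}$ spans $\textbf{\textit{K}}^{+}$ --- a fact the paper never verifies. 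Your proof supplies exactly this missing step, by applying the projections $P^{\pm}$ to the expansion $x^{+}=\sum_n c_n f_n$ to conclude $x^{+}=\sum_n c_n f_n^{+}$, and then invokes the standard finite-dimensional Hilbert space theorem as a black box. So your route is slightly more modular and, as written, actually more complete than the paper's: the paper's compactness argument is self-contained in form but silently depends on the very spanning property you isolate and prove. Both arguments yield the same (non-explicit) bounds; yours additionally points to the Gram matrix eigenvalues if explicit constants are wanted.
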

\begin{proof}
Let $ {\textbf{\textit{K}}}$ be a finite dimensional Kreain space and $\{f_{k}\}_{k=1}^{m}$ 
be a spanning set in $\textbf{\textit{K}}$.We want to show that $\{f_{k}\}_{k=1}^{m}$ is a 
frame for $\textbf{\textit{K}}$.\\
Let $\{e_i\}_{i=1}^{n}$ be a J-orthonormal basis for $\textbf{\textit{K}}$, where n is the dimension of $\textbf{\textit{K}}$. Then by the 
above J-orthonormal basis we have $\textbf{\textit{K}}=\textbf{\textit{K}}^+(\dot{+})\textbf{\textit{K}}^-$ 
where $\textbf{\textit{K}}^{+}$ is a Hilbert space and $\textbf{\textit{K}}^{-}$ is a anti-space of a Hilbert space.
Let $B_{1}=\|f_{1}^{+}\|_J^{2}+\ldots+\|f_{m}^{+}\|_J^{2}$ and $B_{2}=-(\|f_{1}^{-}\|_J^{2}+\ldots+\|f_{m}^{-}\|_J^{2})$. 
Then the upper frame conditions are satisfied.\\
Now consider the function $\phi_1:\textbf{\textit{K}}^+\rightarrow{\mathbb{R}}$ defined by
\begin{equation*}
\phi_1(f^+)=\sum_{k=1}^{m}|\langle f^{+},f_{k}^{+}\rangle|^{2},
\end{equation*}
where $f\in~\textbf{\textit{K}},~f^{+}\in~\textbf{\textit{K}}^{+}~\textmd{and}~f^{-}\in~\textbf{\textit{K}}^{-}$. 
Obviously, $\phi_1$ is a continuous function.\\
Consider the set $S_{1}=\{\sum_{k=1}^{m}|\langle f^{+},f_{k}^{+}\rangle|^{2};~f^{+}\in~\textbf{\textit{K}}^{+},\textmd{and}~\|f^{+}\|=1\}$.\\
Let
\begin{equation*}
A_{1}:=\sum_{k=1}^{m}|\langle g^{+},f_{k}^{+}\rangle|^{2}=inf~S_{1}~\textmd{where}~g^{+}\in{\textbf{\textit{K}}}^{+}~\textmd{and}~\|g^{+}\|=1.
\end{equation*}
It is clear that $A_{1}>0$ and, we have
$~\sum_{k=1}^{m}|\langle f^{+},f_{k}^{+}\rangle|^{2}\geq~A_{1}\|f^{+}\|^{2},~\forall f^{+} \in \textbf{\textit{K}}.$

Again, as $\textbf{\textit{K}}^{-}$ is the anti-space of a Hilbert space, so the inner product on $\textbf{\textit{K}}^{-}$ denoted 
by $\langle{\cdot}\rangle^\prime~\textmd{and defined as}~\langle{x,y}\rangle^\prime=-\langle{x,y}\rangle$, 
where $x,y\in{\textbf{\textit{K}}^{-}}$, turns $\textbf{\textit{K}}^{-}$ into a Hilbert space. 
Since the topology induced by the indefinite inner product on the Krein space is a majorant topology,
 so the intrinsic norm will be invarianted under the above transformation.

Now set $S_{2}=\{\sum_{k=1}^{m}|\langle f^{-},f_{k}^{-}\rangle|^{2}; ~f^{-}\in~\textbf{\textit{K}}^{-},\textmd{and}~\langle~f^{-},f^{-}\rangle=-1\}$\\
Then as above we have $A_2<0$ such that
\begin{equation*}
\sum_{k=1}^{m}|\langle f^{-},f_{k}^{-}\rangle|^{2}\geq~A_{2}\langle~f^{-},f^{-}\rangle~\forall f^{-}\in~\textbf{\textit{K}}^{-}.
\end{equation*}
So the lower frame conditions are satisfied. Hence the proof.
\end{proof}
\subsection{Representation of elements w.r.t. given frame.}
Let $\{f_n\}_{n=1}^{\infty}$ be a frame for the Krein space $\textbf{\textit{K}}$ which may be real or complex. With respect to 
a given $J$-orthonormal basis we have $\textbf{\textit{K}}=\textbf{\textit{K}}^+(\dot{+})\textbf{\textit{K}}^-$ 
where $\textbf{\textit{K}}^+$ and $\textbf{\textit{K}}^-$ are Hilbert space and an anti-space of a Hilbert space respectively. 
So $\{f_n^+\}_{n=1}^{\infty}$ is a frame for the Hilbert space $\textbf{\textit{K}}^+$ and $\{f_n^-\}_{n=1}^{\infty}$ is a frame 
for the Hilbert space $(\textbf{\textit{K}}^-,\langle~.~\rangle^\prime)$ (i.e. w.r.t. the new inner product). 
Let $S_1$ and $S_2$ be the corresponding frame operators, then we have
\begin{equation*}
S_1(f^+)=\sum_{n=1}^{\infty}\langle f^{+},f_{n}^{+}\rangle{f_{n}^{+}},~~\forall~f^+\in{\textbf{\textit{K}}^+}
\end{equation*}
\begin{equation*}
\textmd{and}~~S_2(f^-)=\sum_{n=1}^{\infty}\langle f^{-},f_{n}^{-}\rangle^\prime{f_{n}^{-}},~~\forall~f^-\in{\textbf{\textit{K}}^-}.
\end{equation*}
So for a given $f\in{\textbf{\textit{K}}}$ and $\{f_n\}_{n=1}^{\infty}$ which is a frame for $\textbf{\textit{K}}$ we have a representation for $f$   i.e.
\begin{equation*}
f=\sum_{n=1}^{\infty}\langle f^{+},S_1^{-1}f_{n}^{+}\rangle{f_{n}^{+}}-\sum_{n=1}^{\infty}\langle f^{-},S_2^{-1}f_{n}^{-}\rangle{f_{n}^{-}}.
\end{equation*}
\\
\section{Main results}

\subsection{General Properties}
In Hilbert space frame theory, every frame is a sum of three orthonormal bases \cite{pgi}. We show a similar result in Krein space 
frame theory.
\begin{theorem}
Assume that $\textbf{\textit{K}}$ is a complex Krein space and that $\{f_k\}_{k=1}^{\infty}$ is a frame for $\textbf{\textit{K}}$ 
with pre-frame operators $T^+$ and $T^-$. Then, for every $\epsilon\in(0,1)$ there exist three $J$-orthonormal 
bases $\{e_k^1\}_{k=1}^\infty$, $\{e_k^2\}_{k=1}^\infty$ and $\{e_k^3\}_{k=1}^\infty$ for $\textbf{\textit{K}}$ s.t.
\begin{equation*}
f=\frac{1}{1-\epsilon}\{\|T^+\|_J({e_k^+}^1+{e_k^+}^2+{e_k^+}^3)+\|T^-\|_J({e_m^-}^1+{e_m^-}^2+{e_m^-}^3)\}~~\forall{~k,m}\in{\mathbb{N}}
\end{equation*}
\end{theorem}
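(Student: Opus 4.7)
The natural strategy is to lift the classical Hilbert--space theorem cited as \cite{pgi} through the fundamental decomposition $\textbf{\textit{K}}=\textbf{\textit{K}}^+(\dot{+})\textbf{\textit{K}}^-$. Under the definition of Krein--space frame adopted in Section 2, $\{f_k^+\}_{k=1}^{\infty}$ is a frame for the Hilbert space $\textbf{\textit{K}}^+$ with pre-frame operator $T^+$, while $\{f_k^-\}_{k=1}^{\infty}$ is a frame for the Hilbert space $(\textbf{\textit{K}}^-,\langle\cdot,\cdot\rangle')$ with pre-frame operator $T^-$. This reduces the claim to two independent Hilbert-space applications of \cite{pgi}, provided I can reassemble the resulting orthonormal bases into $J$-orthonormal bases of $\textbf{\textit{K}}$.

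Fix $\epsilon\in(0,1)$. Applying \cite{pgi} to the frame $\{f_k^+\}$ in $\textbf{\textit{K}}^+$ produces three orthonormal bases $\{u_k^i\}_{k=1}^{\infty}$, $i=1,2,3$, with $f_k^+=\tfrac{\|T^+\|}{1-\epsilon}(u_k^1+u_k^2+u_k^3)$ for every $k$, where $\|T^+\|$ is the Hilbert-space operator norm of $T^+$. The same procedure applied to the frame $\{f_m^-\}$ of $(\textbf{\textit{K}}^-,\langle\cdot,\cdot\rangle')$ yields orthonormal bases $\{v_m^i\}_{m=1}^{\infty}$, $i=1,2,3$, with $f_m^-=\tfrac{\|T^-\|}{1-\epsilon}(v_m^1+v_m^2+v_m^3)$. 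Because the intrinsic norm is invariant under the passage $\langle\cdot,\cdot\rangle\to\langle\cdot,\cdot\rangle'$ on $\textbf{\textit{K}}^-$, as already exploited in the proof of Theorem~2.3, these Hilbert-space operator norms coincide with $\|T^\pm\|_J$, matching the scalars that appear in the statement.

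Finally, for each $i\in\{1,2,3\}$ I would define $\{e_k^i\}_{k=1}^{\infty}:=\{u_k^i\}\cup\{v_m^i\}$. Since $\textbf{\textit{K}}^+$ and $\textbf{\textit{K}}^-$ are mutually $J$-orthogonal, since each $u_k^i\in\textbf{\textit{K}}^+$ satisfies $\langle u_k^i,u_k^i\rangle=1$, and since each $v_m^i\in\textbf{\textit{K}}^-$ satisfies $\langle v_m^i,v_m^i\rangle=-\langle v_m^i,v_m^i\rangle'=-1$, the union is a $J$-orthonormal basis of $\textbf{\textit{K}}$; its positive and negative parts recover $u_k^i$ and $v_m^i$ respectively. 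Substituting into $f_k=f_k^++f_k^-$ and reading off the positive and negative components yields exactly the claimed formula.

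The main obstacle, as I see it, is not the Krein-space bookkeeping but confirming that the Hilbert-space reference \cite{pgi} does deliver genuine orthonormal bases (as opposed to Riesz bases) with the precise scaling $\|T^\pm\|/(1-\epsilon)$; once that is granted, the assembly across the fundamental decomposition, together with the invariance of the intrinsic norm under the sign change of the inner product on $\textbf{\textit{K}}^-$, makes the remainder essentially a matter of combining the two Hilbert-space expansions and checking $J$-orthonormality componentwise.
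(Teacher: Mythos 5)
Your proposal is correct and follows essentially the same route as the paper: both reduce the claim via the fundamental decomposition $\textbf{\textit{K}}=\textbf{\textit{K}}^+(\dot{+})\textbf{\textit{K}}^-$ to two Hilbert-space instances of Casazza's three-orthonormal-bases theorem (on $\textbf{\textit{K}}^+$ and on $(\textbf{\textit{K}}^-,\langle\cdot,\cdot\rangle')$) and then recombine. The only difference is presentational: the paper unfolds Casazza's argument inline (forming $U^\pm=\tfrac{1}{2}I+\tfrac{1-\epsilon}{2}T^\pm/\|T^\pm\|_J$, taking the polar decomposition, and writing $T^\pm$ as a combination of three unitaries), whereas you invoke \cite{pgi} as a black box and, usefully, make explicit the step the paper leaves implicit, namely that the unions of the component bases form $J$-orthonormal bases of $\textbf{\textit{K}}$.
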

\begin{proof}
Let $\{e_k\}_{k=1}^\infty$ denote a $J$-orthonormal basis for $\textbf{\textit{K}}$ and let $\{\delta_k\}_{k=1}^\infty$ be the 
cannonical orthonormal basis for $\ell^2(\mathbb{N})$. Composing the pre-frame operator for $\{f_k^+\}_{k=1}^{\infty}$ with the 
isometric isomorphism from $\textbf{\textit{K}}^+$ to $\ell^2(\mathbb{N})$ which maps $e_k^+$ to $\delta_k$ 
(here $e_k^+=e_k\textmd{ if }\langle{e_k,e_k}\rangle=1,\textmd{ and }e_k^-:=e_k, \textmd{~if~} \langle{e_k,e_k}\rangle=-1,)$, 
we obtain a bounded linear operator of $\textbf{\textit{K}}^+$ onto $\textbf{\textit{K}}^+$ (we consider $\textbf{\textit{K}}^+$ as 
a Hilbert space i.e. w.r.t. the intrinsic topology), which maps $e_k^+$ to $f_k^+$, we denote it by $T^+$. Now for a given 
$\epsilon\in(0,1)$, consider the operator
\begin{equation*}
U^+:\textbf{\textit{K}}^+\rightarrow{\textbf{\textit{K}}^+},~\textmd{where}~U^+=\frac{1}{2}I+\frac{1-\epsilon}{2}\frac{T^+}{\|T^+\|_J}.
\end{equation*}
 Then $U^+$ is invertible and we can write $U^+=V^+P^+$, where $V^+$ is unitary and $P^+$ is a positive operator.\\
Observe that,
$\|P^+\|_J<1$.\\
The expression for $T^+$ is
\begin{equation*}
T^+=\frac{\|T^+\|_J}{1-\epsilon}(V^+W^{+}+V^+{W^{+}}^{\ast}-I)
\end{equation*}
It follows from here that,
\begin{equation}
f_k^+=\frac{\|T^+\|_J}{1-\epsilon}({e_k^+}^1+{e_k^+}^2+{e_k^+}^3).
\end{equation}
Similarly composing the pre-frame operator for $\{f_k^-\}_{k=1}^{\infty}$ with the isometric isomorphism 
from $\textbf{\textit{K}}^-$ to $\ell^2(\mathbb{N})$ which maps $e_m^-$ to $\delta_m$ (here $e_k^-= e_k:\langle{e_k,e_k}\rangle=-1$), 
we obtain a bounded linear operator of $\textbf{\textit{K}}^-$ onto $\textbf{\textit{K}}^-$ 
(we consider $(\textbf{\textit{K}}^-,{\langle{\cdot}\rangle}^\prime)$ as a Hilbert space i.e. w.r.t. the intrinsic topology), 
which maps $e_k^-$ to $f_k^-$, and we denote it by $T^-$. For a given $\epsilon\in(0,1)$, consider the operator
\begin{equation*}
U^-:\textbf{\textit{K}}^-\rightarrow{\textbf{\textit{K}}^-},~~~U^-=\frac{1}{2}I+\frac{1-\epsilon}{2}\frac{T^-}{\|T^-\|_J}.
\end{equation*}
Since $\|I-U^-\|_J<1$, so $U^-$ is invertible and we have $U^-=V^-P^-$, where $V^-$ is unitary when $K^-$ is considered as a Hilbert space and $P^-$ is a positive operator.\\
Following the same arguments as above, we have
\begin{equation}
f_m^-=\frac{\|T^+\|_J}{1-\epsilon}({e_m^-}^1+{e_m^-}^2+{e_m^-}^3).
\end{equation}
So by (3.1) and (3.2) we have our desire result. 
\end{proof}
Orthogonal projections play a special role in many contexts. We state a relationship between frames and orthogonal projections \cite{oc}.
\begin{theorem}
Let $\textbf{\textit{K}}$ be a Krein space with fundamental symmetry $J$, and let $P$ be an orthogonal projection commuting with $J$.\\
If $\{f_n\}_{n\in{\mathbb{N}}}$ is a frame for $\textbf{\textit{K}}$ with frame bounds $B_2\leq{A_2}<0<A_1\leq{B_1}$, then $\{Pf_n\}_{n\in{\mathbb{N}}}$ is a frame for $P\textbf{\textit{K}}$ and $\{(I-P)f_n\}_{n\in{\mathbb{N}}}$ is a frame for $(I-P)\textbf{\textit{K}}$, both admitting the same frame bounds.

Conversely, if $\{f_n\}_{n\in{\mathbb{N}}}$ is a frame for $P\textbf{\textit{K}}$ and $\{g_n\}_{n\in{\mathbb{N}}}$ is a frame for $(I-P)\textbf{\textit{K}}$, both with frame bounds $B_2\leq{A_2}<0<A_1\leq{B_1}$, then $\{f_n\}_{n\in{\mathbb{N}}}\bigcup{\{g_n\}_{n\in{\mathbb{N}}}}$ is a frame for $\textbf{\textit{K}}$ admitting the same frame bounds.
\end{theorem}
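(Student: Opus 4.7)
The plan is to reduce both directions to the classical Hilbert-space analogue referenced in \cite{oc} by working in the fundamental decomposition $\textbf{\textit{K}} = \textbf{\textit{K}}^{+}(\dot{+})\textbf{\textit{K}}^{-}$ and exploiting that $P$ commutes with $J$.

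First I would establish the correct translation of the hypotheses into the two Hilbert components. Since $\textbf{\textit{K}}^{\pm}$ are the $\pm 1$-eigenspaces of $J$, the commutation $PJ=JP$ forces $P$ to restrict to operators $P^{+}$ on $\textbf{\textit{K}}^{+}$ and $P^{-}$ on $\textbf{\textit{K}}^{-}$. Writing the associated Hilbert inner product as $\langle x,y\rangle_{J}=\langle Jx,y\rangle$, the $J$-self-adjointness of $P$ combined with $PJ=JP$ gives $\langle Px,y\rangle_{J}=\langle x,Py\rangle_{J}$, so $P$ is self-adjoint in the associated Hilbert space. In particular $P^{+}$ is an orthogonal projection on $(\textbf{\textit{K}}^{+},\langle\cdot,\cdot\rangle)$ and $P^{-}$ is an orthogonal projection on $(\textbf{\textit{K}}^{-},\langle\cdot,\cdot\rangle^{\prime})$, and $P\textbf{\textit{K}}=P^{+}\textbf{\textit{K}}^{+}(\dot{+})P^{-}\textbf{\textit{K}}^{-}$ inherits a fundamental decomposition, making it itself a Krein space with fundamental symmetry $J|_{P\textbf{\textit{K}}}$.

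Next I would unfold the Krein-space frame condition along this decomposition. By our definition, $\{f_{n}\}$ being a frame for $\textbf{\textit{K}}$ with bounds $B_{2}\leq A_{2}<0<A_{1}\leq B_{1}$ is equivalent to $\{f_{n}^{+}\}$ being a Hilbert-space frame for $\textbf{\textit{K}}^{+}$ with bounds $A_{1},B_{1}$ and $\{f_{n}^{-}\}$ being a Hilbert-space frame for $(\textbf{\textit{K}}^{-},\langle\cdot\rangle^{\prime})$ with bounds $|A_{2}|,|B_{2}|$. Because $P$ preserves each summand, $(Pf_{n})^{+}=P^{+}f_{n}^{+}$ and $(Pf_{n})^{-}=P^{-}f_{n}^{-}$. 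Applying the classical Hilbert-space result from \cite{oc} separately to $\textbf{\textit{K}}^{+}$ and $\textbf{\textit{K}}^{-}$ yields that $\{P^{+}f_{n}^{+}\}$ is a frame for $P^{+}\textbf{\textit{K}}^{+}$ and $\{P^{-}f_{n}^{-}\}$ is a frame for $P^{-}\textbf{\textit{K}}^{-}$, each retaining its bounds. Re-packaging these via the Krein-space frame definition gives that $\{Pf_{n}\}$ is a frame for $P\textbf{\textit{K}}$ with bounds $B_{2}\leq A_{2}<0<A_{1}\leq B_{1}$; the statement for $\{(I-P)f_{n}\}$ follows by the same argument since $I-P$ is also a $J$-orthogonal projection commuting with $J$.

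The converse runs by exactly the same translation: decompose each of the given frames on $P\textbf{\textit{K}}$ and $(I-P)\textbf{\textit{K}}$ into their $\textbf{\textit{K}}^{\pm}$ components, invoke the converse direction of the Hilbert-space theorem on $\textbf{\textit{K}}^{+}$ and $\textbf{\textit{K}}^{-}$ separately to glue each pair into a frame with the common bounds, and reassemble to obtain that $\{f_{n}\}\cup\{g_{n}\}$ is a frame for $\textbf{\textit{K}}$ with the prescribed bounds. The main obstacle is not computational but the conceptual bookkeeping at the start: one must verify carefully that ``$P$ is a $J$-orthogonal projection with $PJ=JP$'' descends to genuine Hilbert-space orthogonal projections on $\textbf{\textit{K}}^{+}$ and $\textbf{\textit{K}}^{-}$, and that the Krein-space frame definition decouples cleanly along the fundamental decomposition. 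Once these two translation lemmas are in place, the theorem is a double application of the classical Hilbert-space result.
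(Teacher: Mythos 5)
Your proposal is correct and follows essentially the same route as the paper: both arguments reduce to the fundamental decomposition $\textbf{\textit{K}}=\textbf{\textit{K}}^{+}(\dot{+})\textbf{\textit{K}}^{-}$, use $PJ=JP$ to see that $P$ acts as a genuine orthogonal projection on each component so that $\langle Px^{\pm},f_n^{\pm}\rangle=\langle x^{\pm},Pf_n^{\pm}\rangle$, and glue the converse via the Pythagorean identity $\langle k_1^{\pm},k_1^{\pm}\rangle+\langle k_2^{\pm},k_2^{\pm}\rangle=\langle k^{\pm},k^{\pm}\rangle$. The only cosmetic difference is that you invoke the classical Hilbert-space theorem from \cite{oc} componentwise as a black box, whereas the paper writes out those same inner-product manipulations inline.
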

\begin{proof}
Since $P$ commutes with $J$, the subspaces $P\textbf{\textit{K}}$ and $(I-P)\textbf{\textit{K}}$ are Krein spaces with fundamental 
symmetry $PJ$ and $(I-P)J$, respectively. Since $\{f_n\}_{n\in{\mathbb{N}}}$ is a frame for $\textbf{\textit{K}}$ with frame 
bounds $B_2\leq{A_2}<0<A_1\leq{B_1}$ and with fundamental symmetry $J$,\\
We have,\\
\begin{equation*}
\sum_{n\in\mathbb{N}}|\langle{P}x^{+},f_{n}^{+}\rangle|^2=\sum_{n\in\mathbb{N}}|\langle{x}^{+},Pf_{n}^{+}\rangle|^2
\end{equation*}
and
\begin{equation*}
\sum_{n\in\mathbb{N}}|\langle{P}x^{-},f_{n}^{-}\rangle|^2=\sum_{n\in\mathbb{N}}|\langle{x}^{-},Pf_{n}^{-}\rangle|^2
\end{equation*}
So from the above two equations we conclude that $\{Pf_n\}_{n\in{\mathbb{N}}}$ is a frame for $P\textbf{\textit{K}}$.\\
The same remains true for $P$ replaced by $(I-P)$. Hence the result.

For the converse, let $\{f_n\}_{n\in{\mathbb{N}}}$ and $\{g_n\}_{n\in{\mathbb{N}}}$ be two frames satisfying the assumptions stated in the proposition. For $k\in{\textbf{\textit{K}}}$, set $k_1:=Pk~\textmd{ and }k_2:=(I-P)k$. Note that,
\begin{equation*}
\langle{k^+},f_n^+\rangle=\langle{k^+},Pf_n^+\rangle=\langle{Pk^+},f_n^+\rangle=\langle{k_1^+},f_n^+\rangle
\end{equation*}
Similarly
\begin{equation*}
\langle{k^-},f_n^-\rangle=\langle{k_1^-},f_n^-\rangle,~\langle{k^+},g_n^+\rangle=
\langle{k_2^+},g_n^+\rangle,~\langle{k^-},g_n^-\rangle=\langle{k_2^-},g_n^-\rangle\\
\end{equation*}
From $PJ=JP$, it follows that,
\begin{equation*}
\langle{k_1^+},k_1^+\rangle+\langle{k_2^+},k_2^+\rangle=\langle{k^+},k^+\rangle~\textmd{and }
\langle{k_1^-},k_1^-\rangle+\langle{k_2^-},k_2^-\rangle=\langle{k^-},k^-\rangle
\end{equation*}
Now from the above equations we have our desired result. 
\end{proof}

\subsection{Frame sequences and their properties}
Frame sequence in a Hilbert space $\textbf{\emph{H}}$ is a well known concept. S.K.Kaushik et. al.\cite{sgv} published a paper in the year 2008 to show some important properties of frame sequences. We will show that the results they found also holds for frames in Krein Spaces.\\
Now let $\textbf{\textit{K}}$ be a Krein space.
\begin{definition}
A sequence $\{f_n:n\in{\mathbb{N}}\}\textmd{ in }{\textbf{\textit{K}}}$ is said to be a frame sequence if $\{f_n^+:n\in{\mathbb{N}}\}$ is a frame for $[f_n^+]$ and $\{f_n^-:n\in{\mathbb{N}}\}$ is a frame for the anti-Hilbert space $[f_n^-]$ respectively. ( where $[f_n^+]$ and $[f_n^-]$ denotes the closed linear span of $\{f_n^+\}$ and $\{f_n^-\}$ respectively w.r.t. the intrinsic topology )
\end{definition}
\begin{definition}
A frame $\{f_n\}\textmd{ in }{\textbf{\textit{K}}}$ is called exact if removal of an arbitrary $f_n$ renders the collection $\{f_n\}$ no longer a frame for  $\textbf{\textit{K}}$.
\end{definition}
\begin{definition}
A frame $\{f_n\}\textmd{ in }{\textbf{\textit{K}}}$ is called near exact if it can be made exact by removing finitely many elements from it.\\
Also a near exact frame is called proper if it is not exact.
\end{definition}
\begin{theorem}
Let $\{f_n\}$ be any frame of $\textbf{\textit{K}}$ and let $\{m_k\}$ and $\{n_k\}$ be two infinite increasing sequence of $\mathbb{N}$
 with $\{m_k\}\cup\{n_k\}=\mathbb{N}$. Also let $\{f_{m_{k}}\}$ be frame for $\textbf{\textit{K}}$. 
Then $\{f_{n_{k}}\}$ is a frame for $\textbf{\textit{K}}$ if there exist bounded linear operators $S_1$ and $S_2$ s.t.,
\begin{equation*}
S_1:\ell^2(\mathbb{N})\rightarrow\ell^2(\mathbb{N})~\textmd{ defined by }S_1\{\langle~f_{n_{k}}^+,f^+\rangle\}=\{\langle~f_{m_{k}}^+,f^+\rangle\},
\end{equation*}
and
\begin{equation*}
S_2:\ell^2(\mathbb{N})\rightarrow\ell^2(\mathbb{N})~\textmd{ defined by }S_2\{\langle~f_{n_{k}}^-,f^-\rangle\}=\{\langle~f_{m_{k}}^-,f^-\rangle\}.
\end{equation*}
The converse is true only if both $\{f_{m_{k}}\},~\{f_{m_{k}}\}$ are Parseval frames.
\end{theorem}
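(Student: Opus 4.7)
The plan is to reduce the statement to the corresponding assertions about $\{f_{n_k}^+\}$ in the Hilbert space $\textbf{\textit{K}}^+$ and $\{f_{n_k}^-\}$ in the Hilbert space $(\textbf{\textit{K}}^-,\langle\cdot\rangle')$ separately, using the frame definition of Section~2. By hypothesis, $\{f_n^+\}$ and $\{f_{m_k}^+\}$ are frames for $\textbf{\textit{K}}^+$, and analogously for the negative parts. I will carry out the argument for the positive component; the negative component is entirely parallel with $S_2$ and $\langle\cdot\rangle'$ in place of $S_1$ and $\langle\cdot\rangle$.

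For the forward direction, fix $f^+\in\textbf{\textit{K}}^+$. The upper frame bound for $\{f_{n_k}^+\}$ is immediate from the non-negativity of the summands:
\begin{equation*}
\sum_k|\langle f^+,f_{n_k}^+\rangle|^2 \leq \sum_{n\in\mathbb{N}}|\langle f^+,f_n^+\rangle|^2 \leq B_1\langle f^+,f^+\rangle,
\end{equation*}
where $B_1$ is the upper frame bound of $\{f_n^+\}$. For the lower bound, I invoke the boundedness of $S_1$ on $\ell^2(\mathbb{N})$:
\begin{equation*}
A_1'\langle f^+,f^+\rangle \leq \sum_k|\langle f^+,f_{m_k}^+\rangle|^2 = \|S_1\{\langle f^+,f_{n_k}^+\rangle\}\|_{\ell^2}^2 \leq \|S_1\|^2\sum_k|\langle f^+,f_{n_k}^+\rangle|^2,
\end{equation*}
where $A_1'$ is the lower frame bound of $\{f_{m_k}^+\}$. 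Dividing through yields the positive lower frame bound $A_1'/\|S_1\|^2$ for $\{f_{n_k}^+\}$. Repeating the argument on $\textbf{\textit{K}}^-$ with $S_2$ and $\langle\cdot\rangle'$ delivers the analogous inequalities on the negative part, and Definition~2.3 concludes that $\{f_{n_k}\}$ is a frame for $\textbf{\textit{K}}$.

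For the converse, assume $\{f_{n_k}\}$ is a frame for $\textbf{\textit{K}}$ and that both $\{f_{m_k}\}$ and $\{f_{n_k}\}$ are Parseval. Then for every $f^+\in\textbf{\textit{K}}^+$,
\begin{equation*}
\sum_k|\langle f^+,f_{m_k}^+\rangle|^2 = \langle f^+,f^+\rangle = \sum_k|\langle f^+,f_{n_k}^+\rangle|^2.
\end{equation*}
The analysis operator $T_n^+:f^+\mapsto\{\langle f^+,f_{n_k}^+\rangle\}$ is injective because a frame is complete, so the prescription $\{\langle f^+,f_{n_k}^+\rangle\}\mapsto\{\langle f^+,f_{m_k}^+\rangle\}$ is well defined on $T_n^+(\textbf{\textit{K}}^+)$ and, by the displayed Parseval identities, is an isometry onto $T_m^+(\textbf{\textit{K}}^+)$. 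Both are closed subspaces of $\ell^2(\mathbb{N})$, so extending by zero on the orthogonal complement of $T_n^+(\textbf{\textit{K}}^+)$ produces a bounded operator $S_1:\ell^2(\mathbb{N})\to\ell^2(\mathbb{N})$ with $\|S_1\|\leq 1$. The same construction on $\textbf{\textit{K}}^-$ yields $S_2$.

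I expect the main obstacle to lie in the converse direction: verifying that the assignment is well defined on the range of the analysis operator and then extending it to all of $\ell^2(\mathbb{N})$ as a bounded operator. The Parseval hypothesis is exactly what renders the map an isometry between two closed subspaces of $\ell^2(\mathbb{N})$, so the zero-extension is automatically bounded; without Parseval one would have to track the ratio of frame bounds for $\{f_{m_k}^+\}$ and $\{f_{n_k}^+\}$ and argue more carefully that the induced bijection between the two ranges admits a bounded extension to the whole space.
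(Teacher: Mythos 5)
Your argument is correct and takes essentially the same route as the paper's: the forward direction transfers the lower bound of $\{f_{m_k}^{\pm}\}$ through the bounded operators $S_1,S_2$ exactly as the paper does (your constant $A_1'/\|S_1\|^2$ is in fact the correct one; the paper writes $A_1/\|S_1\|$, dropping a square), and your converse construction --- the isometry from the range of the analysis operator of $\{f_{n_k}^+\}$ onto that of $\{f_{m_k}^+\}$, extended by zero --- coincides with the paper's operator $S_1=(T_2^{\ast})^+T_1^+$ (analysis operator of $\{f_{m_k}^+\}$ composed with the synthesis operator of the Parseval frame $\{f_{n_k}^+\}$), because the synthesis operator of a Parseval frame vanishes on the orthogonal complement of the range of its analysis operator. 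There is no genuine difference in approach.
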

\begin{proof}
$(\Rightarrow)$ Since $\{f_{m_{k}}\}$ is a frame for $\textbf{\textit{K}}$, let $A_1$ and $A_2$ be lower bounds of the frames 
$\{f_{m_{k}}^+\}$ and $\{f_{m_{k}}^-\}$ respectively. Now we have,
\begin{equation*}
\sum|\langle~f_{n_{k}}^+,f^+\rangle|^2\geq~\frac{\sum|\langle~f_{m_{k}}^+,f^+\rangle|^2}{\|S_1\|}\geq~\frac{A_1}{\|S_1\|}\|f^+\|^2.
\end{equation*}
Similarly we also have,
\begin{equation*}
\sum|\langle~f_{n_{k}}^-,f^-\rangle|^2\geq~\frac{\sum|\langle~f_{m_{k}}^-,f^-\rangle|^2}{\|S_2\|}\geq~\frac{A_2}{\|S_2\|}\|f^-\|^2.
\end{equation*}
Hence $\{f_{n_{k}}\}$ is also a frame for $\textbf{\textit{K}}$.\\
$(\Leftarrow)$ Conversely, let $\{f_{n_{k}}\}$ be an Parseval frame for $\textbf{\textit{K}}$. Then let the synthesis and analysis operators for $\textbf{\textit{K}}^+$ be $T_1^+$ and $(T^{\ast}_1)^+$ respectively and the same for $\textbf{\textit{K}}^-$ be $T_1^-$ and $(T^{\ast}_1)^-$.
Also, since $\{f_{m_{k}}\}$ is a frame for $\textbf{\textit{K}}$, there exist operators $T_2^+$ and $(T^{\ast}_2)^+$ for $\textbf{\textit{K}}^+$ and there exists operators $T_2^-$ and $(T^{\ast}_2)^-$ for $\textbf{\textit{K}}^-$.
Then $S_1=(T^{\ast}_2)^+T_1^+:\ell^2(\mathbb{N})\rightarrow\ell^2(\mathbb{N})$ is a bounded linear operator such that
$~S_1\{\langle~f_{n_{k}}^+,f^+\rangle\}=\{\langle~f_{m_{k}}^+,f^+\rangle\}$, and $S_2=(T^{\ast}_2)^-T_1^-:\ell^2(\mathbb{N})\rightarrow\ell^2(\mathbb{N})$ is a bounded linear operator such that
$~S_2\{\langle~f_{n_{k}}^-,f^-\rangle\}=\{\langle~f_{m_{k}}^-,f^-\rangle\}$. 
\end{proof}
\begin{theorem}
Let $\{f_n\}$ be any frame for $\textbf{\textit{K}}$ and let $\{m_k\}$, $\{n_k\}$ be two infinite increasing sequences with $\{m_k\}\cup\{n_k\}=\mathbb{N}$. Let $\textbf{\textit{K}}^{\prime}=[f_{m_k}]\cap[f_{n_k}]$.\\
If $\textbf{\textit{K}}^{\prime}$ is a finite dimensional space, then $\{f_{m_{k}}\}$ and $\{f_{n_{k}}\}$ are frame sequences for $\textbf{\textit{K}}$.
\end{theorem}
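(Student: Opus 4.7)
The plan is to reduce the claim to the analogous Hilbert-space frame-sequence theorem of Kaushik et al.\ \cite{sgv}, applied separately on the two components of the fundamental decomposition. I would begin by fixing a $J$-orthonormal basis, writing $\textbf{\textit{K}}=\textbf{\textit{K}}^{+}(\dot{+})\textbf{\textit{K}}^{-}$ and $f_n=f_n^{+}+f_n^{-}$; by the frame hypothesis on $\{f_n\}$ together with the definition introduced in Section~2, the sequence $\{f_n^{+}\}$ is then a Hilbert-space frame for $\textbf{\textit{K}}^{+}$ and $\{f_n^{-}\}$ is a Hilbert-space frame for $(\textbf{\textit{K}}^{-},\langle\cdot\rangle^{\prime})$.

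Next I would transfer the hypothesis on $\textbf{\textit{K}}^{\prime}=[f_{m_k}]\cap[f_{n_k}]$ to the corresponding intersections of projected closed spans. Since the fundamental projections $P^{+}$ and $P^{-}$ are bounded in the intrinsic (majorant) topology, one has $[f_{m_k}^{\pm}]=\overline{P^{\pm}([f_{m_k}])}$, and similarly for $n_k$. A dimension count exploiting these relations should give that $[f_{m_k}^{+}]\cap[f_{n_k}^{+}]$ and $[f_{m_k}^{-}]\cap[f_{n_k}^{-}]$ are each finite-dimensional whenever $\textbf{\textit{K}}^{\prime}$ is. With this in hand, Kaushik's theorem applied in $\textbf{\textit{K}}^{+}$ to the frame $\{f_n^{+}\}$ with subsequences $\{f_{m_k}^{+}\}$ and $\{f_{n_k}^{+}\}$ produces Hilbert-space frame sequences on the positive side; the same argument in $(\textbf{\textit{K}}^{-},\langle\cdot\rangle^{\prime})$ handles the negative side. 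Invoking the definition of a frame sequence in $\textbf{\textit{K}}$ then yields the conclusion.

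The principal obstacle is precisely the dimensional transfer step, and it is delicate: in general $[f_{m_k}]$ is strictly smaller than the intrinsic orthogonal sum $[f_{m_k}^{+}](\dot{+})[f_{m_k}^{-}]$, so the intersection $\textbf{\textit{K}}^{\prime}$ in $\textbf{\textit{K}}$ can be much smaller than the analogous intersection inside $\textbf{\textit{K}}^{+}(\dot{+})\textbf{\textit{K}}^{-}$, and in extreme cases $[f_{m_k}^{+}]\cap[f_{n_k}^{+}]$ may even be infinite-dimensional while $\textbf{\textit{K}}^{\prime}=\{0\}$. Overcoming this gap may require either imposing an implicit linear independence between the positive and negative parts of $\{f_{m_k}\}$ and $\{f_{n_k}\}$, or bypassing Kaushik's theorem via a Krein-space-specific argument—for instance, analyzing directly the ranges of the restrictions of the pre-frame operators $T^{+}$ and $T^{-}$ to $\ell^{2}(\{m_k\})$ and $\ell^{2}(\{n_k\})$ and showing they are closed under the stated finite-dimensionality assumption on $\textbf{\textit{K}}^{\prime}$.
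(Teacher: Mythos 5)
Your reduction to Kaushik's Hilbert-space theorem founders on exactly the step you flag, and the obstacle is not merely delicate but fatal to the route as proposed: finite-dimensionality of $\textbf{\textit{K}}^{\prime}=[f_{m_k}]\cap[f_{n_k}]$ simply does not propagate to $[f_{m_k}^{+}]\cap[f_{n_k}^{+}]$ or $[f_{m_k}^{-}]\cap[f_{n_k}^{-}]$. Concretely, take $m_k=2k-1$, $n_k=2k$, and set $f_{2k-1}=\tfrac{1}{\sqrt{2}}(e_k^{+}+e_k^{-})$, $f_{2k}=\tfrac{1}{\sqrt{2}}(e_k^{+}-e_k^{-})$ for a $J$-orthonormal basis $\{e_k^{\pm}\}$; then $\{f_n\}$ is a frame for $\textbf{\textit{K}}$ and $\textbf{\textit{K}}^{\prime}=\{0\}$, yet $[f_{m_k}^{+}]\cap[f_{n_k}^{+}]=\textbf{\textit{K}}^{+}$ is infinite-dimensional. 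So the hypothesis you would need in order to invoke the componentwise Hilbert-space theorem is strictly stronger than the hypothesis of the statement, and the "dimension count exploiting $[f_{m_k}^{\pm}]=\overline{P^{\pm}([f_{m_k}])}$" cannot be made to work. Since you neither close this gap nor carry out the alternative you sketch (closedness of the ranges of the restricted pre-frame operators), the proposal does not constitute a proof.

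The paper takes a different and more direct route that sidesteps the projected intersections entirely. It picks a finite subfamily $\{f_{l_k}\}$ of $\{f_{n_k}\}$ spanning $\textbf{\textit{K}}^{\prime}$, which is automatically a frame for the finite-dimensional space $\textbf{\textit{K}}^{\prime}$ by Theorem 2.6, with lower bounds $A_1^{\prime},A_2^{\prime}$. An arbitrary $f\in[f_{n_k}]$ is then decomposed as $f=f^{\prime}+f^{\prime\prime}$ with $f^{\prime}\perp\textbf{\textit{K}}^{\prime}$ and $f^{\prime\prime}\in\textbf{\textit{K}}^{\prime}$; the global lower frame bound of $\{f_n\}$ is applied to $f^{\prime}$ (using that for such $f^{\prime}$ the terms indexed by $\{m_k\}\setminus\{n_k\}$ contribute nothing, so the sum over $\{n_k\}$ already dominates $A_1\|(f^{+})^{\prime}\|^2$), the $\textbf{\textit{K}}^{\prime}$-frame bound is applied to $f^{\prime\prime}$, and the two estimates are combined to give the lower bound $\min\{\tfrac{A_1}{2},\tfrac{A_1^{\prime}}{2}\}\|f^{+}\|^2$, with the analogous estimate on the negative component. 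If you want to salvage your approach, you would need to run this splitting argument inside each of $[f_{n_k}^{+}]$ and $[f_{n_k}^{-}]$ directly rather than trying to import the finite-dimensionality hypothesis into the components.
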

\begin{proof}
Let $\{l_k\}$ be any finite subsequence of $\{n_k\}$ such that $\textbf{\textit{K}}^{\prime}=[f_{l_{k}}]$.
 Let $\{f_{l_{k}}\}$ be a frame for $\textbf{\textit{K}}^\prime$ and $B_2^\prime<A_2^\prime<0<A_1^\prime<B_1^\prime$ be the 
bounds of the frame $\{f_{l_{k}}\}$. Consider $\{f_{n_{k}}\}$. Let $f\in~[f_{n_k}]$ be any element. 
Now, if $f\perp~\textbf{\textit{K}}^\prime$, then\\
$\sum|\langle~f^+,f_n^+\rangle|^2=\sum|\langle~f^+,f_{n_k}^+\rangle|^2\geq~A_1\|f^+\|^2$ (here $A_1>0$)\\
$\sum|\langle~f^-,f_n^-\rangle|^2=\sum|\langle~f^-,f_{n_k}^-\rangle|^2\geq~A_2\langle~f^-,f^-\rangle$ (here $A_2<0$)\\
Also, if $f\in~\textbf{\textit{K}}^\prime$ then\\
$\sum|\langle~f^+,f_{n_k}^+\rangle|^2\geq~\sum|\langle~f^+,f_{l_k}^+\rangle|^2\geq~A_1^\prime\|f^+\|^2$\\
$\sum|\langle~f^-,f_{n_k}^-\rangle|^2\geq~\sum|\langle~f^-,f_{l_k}^-\rangle|^2\geq~A_2^\prime\langle~f^-,f^-\rangle$\\
Otherwise, we have\\
$f^+=\sum\alpha_kf_{n_k}^+=(f^+)^\prime+(f^+)^{\prime\prime}$, and
$f^-=\sum\alpha_kf_{n_k}^-=(f^-)^\prime+(f^-)^{\prime\prime}$,\\
here $i\in~\{n_k\}-\{l_k\},~j\in~\{l_k\}$
and $f^\prime~\perp~\textbf{\textit{K}}^\prime$ and $f^{\prime\prime}\in~\textbf{\textit{K}}^\prime$.\\
Thus,\\
$\sum|\langle~f^+,f_{n_k}^+\rangle|^2=\sum|\langle~f^+,f_i^+\rangle|^2+\sum|\langle~f^+,f_j^+\rangle|^2,~i\in\{n_k\}-\{l_k\},~j\in\{l_k\}$\\
$=\sum|\langle~(f^+)^\prime,f_i^+\rangle|^2+\sum|\langle~(f^+)^{\prime\prime},f_j^+\rangle|^2$\\
$\geq~A_1\|(f^+)^\prime\|^2+A_1^\prime\|(f^+)^{\prime\prime}\|^2$\\
$\geq~min\{\frac{A_1}{2},\frac{A_1^\prime}{2}\}\|f^+\|^2$\\
Similarly we have,\\
$\sum|\langle~f^-,f_{n_k}^-\rangle|^2\geq~min\{\frac{A_2}{2},\frac{A_2^\prime}{2}\}\langle~f^-,f^-\rangle.$\\
Hence $\{f_{n_k}\}$ is a frame sequence. Similarly we can show $\{f_{m_k}\}$ is also a frame sequence.\\
\end{proof}
\begin{theorem}
Let $\{f_n\}$ be a frame for $\textbf{\textit{K}}$ with optimal bounds $B_2<A_2<0<A_1<B_1$ 
such that $f_n\neq~0~\forall~n\in{\mathbb{N}}$. If for every infinite increasing sequence $\{n_k\}\in{\mathbb{N}}$, $\{f_{n_k}\}$ 
is a frame sequence with optimal bounds $B_2<A_2<0<A_1<B_1$, then $\{f_n\}$ is an exact frame.
\end{theorem}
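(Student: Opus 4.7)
My plan is to reduce the claim to the corresponding Hilbert-space result of Kaushik et al.\cite{sgv}. First I would unpack the Krein-space hypothesis via the fundamental decomposition $\textbf{\textit{K}} = \textbf{\textit{K}}^+\dot{+}\textbf{\textit{K}}^-$: by the working definition of a frame on $\textbf{\textit{K}}$, the family $\{f_n^+\}$ is a Hilbert-space frame for $\textbf{\textit{K}}^+$ with optimal bounds $A_1, B_1$, and $\{f_n^-\}$ is a Hilbert-space frame for $(\textbf{\textit{K}}^-,\langle\cdot,\cdot\rangle')$ with the corresponding bounds. The subsequence hypothesis splits similarly: every infinite $\{f_{n_k}^+\}$ is a Hilbert frame sequence with the same optimal bounds $A_1, B_1$ on its closed span in $\textbf{\textit{K}}^+$, and analogously for the negative parts on $\textbf{\textit{K}}^-$.

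Next I would invoke the Hilbert-space analogue of the theorem from \cite{sgv} separately on each factor, obtaining that $\{f_n^+\}$ is an exact Hilbert-space frame for $\textbf{\textit{K}}^+$ and $\{f_n^-\}$ is an exact Hilbert-space frame for $\textbf{\textit{K}}^-$. To deduce exactness of $\{f_n\}$ in $\textbf{\textit{K}}$, fix an arbitrary $n_0\in\mathbb{N}$. Since $f_{n_0}\neq 0$, at least one of $f_{n_0}^+, f_{n_0}^-$ is nonzero. If $f_{n_0}^+\neq 0$, then exactness of $\{f_n^+\}$ forces $\{f_n^+\}_{n\neq n_0}$ not to be a Hilbert frame for $\textbf{\textit{K}}^+$, and by the Krein-space frame definition $\{f_n\}_{n\neq n_0}$ cannot be a frame for $\textbf{\textit{K}}$; the case $f_{n_0}^-\neq 0$ is symmetric. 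Thus removal of any $f_{n_0}$ destroys the frame property, and $\{f_n\}$ is exact.

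The main obstacle I anticipate is the invocation of the Hilbert-space result. Some components $f_n^+$ or $f_n^-$ may vanish even though $f_n\neq 0$, so the ``all vectors nonzero'' hypothesis of \cite{sgv} is not automatically inherited on each factor. I would handle this by discarding on each factor the indices whose component vanishes (these contribute nothing to the frame sums) and checking that the reduced subsequences still meet the optimal-bound hypothesis. The heart of the argument in \cite{sgv} is a rank-one perturbation analysis of the frame operator: if removing $f_{n_0}^+$ leaves a frame with identical optimal bounds at both extremes of the spectrum, then via Sherman--Morrison the positive rank-one operator $f_{n_0}^+(f_{n_0}^+)^\ast$ cannot preserve both $\|S^+\|=B_1$ and $\|(S^+)^{-1}\|=1/A_1$ unless it vanishes, forcing $f_{n_0}^+=0$. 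This spectral step is the real work; if a direct citation cannot be invoked, it must be re-derived carefully in the present Krein-space setting.
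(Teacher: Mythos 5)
Your overall strategy is sound and lands in essentially the same place as the paper, but the packaging is different. The paper does not invoke \cite{sgv} as a black box: it argues directly by contradiction on $\textbf{\textit{K}}$, assuming $\{f_n\}$ is not exact, picking the offending index $m$, applying the subsequence hypothesis to the single sequence $n_k$ that omits $m$, and concluding $f_m^+=0$ and $f_m^-=0$ simultaneously, contradicting $f_m\neq 0$. You instead run the Hilbert-space theorem twice (once on $\textbf{\textit{K}}^+$, once on $(\textbf{\textit{K}}^-,\langle\cdot,\cdot\rangle')$) and then glue. Your gluing step is correct and is actually more careful than the paper on one point the paper glosses over: since a Krein frame fails the moment \emph{either} component family fails, it suffices that the component of $f_{n_0}$ which is nonzero destroys its factor's frame property, and your observation that indices with vanishing component must be discarded before exactness on a factor can even hold (a family containing the zero vector is never exact) is a genuine and necessary refinement of the reduction. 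You should also note the edge case where the set of indices with nonzero positive (or negative) component is finite, which your subsequence bookkeeping does not quite cover.

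The one substantive concern is your closing sketch of how to re-derive the key step. It is not true that removing a single nonzero vector from a frame must change at least one of the two optimal bounds: in $\ell^2(\mathbb{N})$ the family $\{e_1,e_1,\sqrt{2}e_2,e_3,\sqrt{2}e_4,e_5,\dots\}$ has optimal bounds $1$ and $2$, and deleting one copy of $e_1$ leaves a frame with the same optimal bounds $1$ and $2$. So a Sherman--Morrison analysis of the single rank-one perturbation $f_{n_0}^+(f_{n_0}^+)^{\ast}$ cannot by itself force $f_{n_0}^+=0$; the full strength of the hypothesis --- that \emph{every} infinite subsequence is a frame sequence for its own closed span with the same optimal bounds --- is what rules out such examples. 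As long as you lean on the cited theorem of \cite{sgv} this does not sink your proof, but the fallback argument as sketched would fail. (The paper's own deduction of $|\langle f^+,f_m^+\rangle|^2=0$ from the bounds of the single deleted subsequence is terse on exactly this same point, so you are in good company.)
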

\begin{proof}
Suppose $\{f_n\}$ is not exact. Then there exists an $m\in{\mathbb{N}}$ such that $f_m^+\in~[f_i^+]_{i\neq~m}$ 
and $f_m^-\in~[f_i^-]_{i\neq~m}$. Let $\{n_k\}$ be an increasing sequence given by $n_k=k$, $k=~1,~2,~\ldots,m-1$ 
and $n_k=k+1$, $k =m,m+1,\ldots$. Since $\{f_{n_k}\}$ is a frame 
for $\textbf{\textit{K}}$ with bounds $B_2<A_2<0<A_1<B_1$,\\
\begin{equation*}
A_1\|f^+\|^2\leq\sum_{n\neq{m}}|\langle~f^+,f_n^+\rangle|^2\leq~B_1\|f^+\|^2
\end{equation*}
\begin{equation*}
\textmd{ and~~~~ }A_2\langle~f^-,f^-\rangle~\leq\sum_{n\neq{m}}|\langle~f^-,f_n^-\rangle|^2\leq~B_2\langle~f^-,f^-\rangle.
\end{equation*}
Therefore, by frame inequality for the frame $\{f_n\}$, $|\langle~f^+,f_m^+\rangle|^2=0~\forall~f^+\in~\textbf{\textit{K}}^+$.
This gives $f_m^+=0$.
Similarly we can have $f_m^-=0$ i.e.$~f_m=0$ which is a contradiction.\\
The proof is complete.
\end{proof}

\subsection{Frame Potential}
In 2001, John Benedetto and Matthew Fickus \cite{jm} developed a theoretical notion of Frame potential, 
which is analogue to the potential energy in Physics \cite{dkde}. The frame potential of a collection of vectors must be a scaler 
quantity derived from the inner products between the vectors. In this section we define the frame potential for a collection of 
unit vectors in Krein space $\textbf{\textit{K}}$, similar to what had been done for frames in Hilbert space theory.
\begin{definition}
Let $\textbf{\textit{K}}$ be a Krein space of dimension n. Let $\textbf{\textit{F}}=\{f_i\}_{i=1}^{k}$ be a collection of vectors in $\textbf{\textit{K}}$ s.t. $\|f_i\|_J=1~\forall~i=1,2,..$. The frame potential for $\textbf{\textit{F}}$ is the quantity
$P_{\textbf{\textit{F}}}=\sum_{i=1}^{k}\sum_{j=1}^{k}(|\langle~f_i^+,f_j^+\rangle|^2+|\langle~f_i^-,f_j^-\rangle|^2)$.
\end{definition}
\begin{lemma}
Let $\textbf{\textit{F}}=\{f_i\}_{i=1}^{k}$ be a tight frame of unit vectors in the Krein space $\textbf{\textit{K}}$ of dimension n. 
Then the frame potential $P_\textbf{\textit{F}}$ of $\textbf{\textit{F}}$ is $\frac{k^2}{n}$.
\end{lemma}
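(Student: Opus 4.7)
The plan is to exploit the $J$-orthogonal splitting $\textbf{\textit{K}}=\textbf{\textit{K}}^{+}(\dot{+})\textbf{\textit{K}}^{-}$ furnished by the fixed $J$-orthonormal basis and to reduce the statement to the familiar Hilbert--space calculation of the frame potential, once on $\textbf{\textit{K}}^{+}$ and once on the anti-Hilbert space $(\textbf{\textit{K}}^{-},\langle\cdot\rangle^{\prime})$. Write each frame vector as $f_i=f_i^{+}+f_i^{-}$, so that the normalisation $\|f_i\|_J=1$ reads $\langle f_i^{+},f_i^{+}\rangle-\langle f_i^{-},f_i^{-}\rangle=1$; set $n^{+}=\dim\textbf{\textit{K}}^{+}$ and $n^{-}=\dim\textbf{\textit{K}}^{-}$, so $n^{+}+n^{-}=n$.

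The first step is to pin down the common tight-frame constant. By the definition of a tight frame there is a real number $D>0$ such that $\{f_i^{+}\}$ is tight with bound $D$ on $\textbf{\textit{K}}^{+}$ and $\{f_i^{-}\}$ is tight with bound $D$ on the anti-Hilbert space $(\textbf{\textit{K}}^{-},\langle\cdot\rangle^{\prime})$. The two frame operators are then $DI$ on each component, and taking traces gives
\begin{equation*}
D\,n^{+}=\sum_{i=1}^{k}\langle f_i^{+},f_i^{+}\rangle,\qquad D\,n^{-}=-\sum_{i=1}^{k}\langle f_i^{-},f_i^{-}\rangle.
\end{equation*}
Adding the two identities and using $\|f_i\|_J=1$ for every $i$ yields $Dn=\sum_{i=1}^{k}\|f_i\|_J^{2}=k$, whence $D=k/n$.

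The second step is to apply the tight-frame identity to the test vectors $x^{+}=f_j^{+}$ and $x^{-}=f_j^{-}$ and to sum over $j$. Interchanging the order of summation in the definition of $P_{\textbf{\textit{F}}}$,
\begin{equation*}
P_{\textbf{\textit{F}}}=\sum_{j=1}^{k}\Bigl(\sum_{i=1}^{k}|\langle f_j^{+},f_i^{+}\rangle|^{2}+\sum_{i=1}^{k}|\langle f_j^{-},f_i^{-}\rangle|^{2}\Bigr)=D\sum_{j=1}^{k}\bigl(\langle f_j^{+},f_j^{+}\rangle-\langle f_j^{-},f_j^{-}\rangle\bigr)=Dk=\frac{k^{2}}{n}.
\end{equation*}

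There is no deep obstacle; the only point requiring care is the sign convention on the negative part. The tight-frame bound on $\textbf{\textit{K}}^{-}$ is negative in the Krein pairing $\langle\cdot\rangle$ but positive in the anti-Hilbert pairing $\langle\cdot\rangle^{\prime}=-\langle\cdot\rangle$, so one must be consistent when computing the trace of the frame operator on $\textbf{\textit{K}}^{-}$ and when applying the tight-frame identity with $x^{-}=f_j^{-}$. Once the signs are tracked correctly, both contributions reassemble into the single intrinsic norm $\|f_j\|_J^{2}$, and the result $k^{2}/n$ drops out immediately.
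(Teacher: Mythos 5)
Your proof is correct and follows essentially the same route as the paper: identify the common tight-frame constant as $k/n$, then apply the tight-frame identity with the frame vectors themselves as test vectors and sum over $j$, using $\|f_j\|_J^2=\langle f_j^{+},f_j^{+}\rangle-\langle f_j^{-},f_j^{-}\rangle=1$. The only difference is that you justify the value $D=k/n$ by a trace computation on the two frame operators, a step the paper simply asserts.
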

\begin{proof}
If $\{f_i\}_{i=1}$ is a tight frame of unit vectors on an n-dimensional Krein space $\textbf{\textit{K}}$, then the frame bound satisfies
\begin{equation*}
B_2=A_2=C=-\frac{k}{n}~~\textmd{and}~~B_1=A_1=D=\frac{k}{n}.
\end{equation*}
Hence,
\begin{equation*}
\begin{split}
\sum(\sum|\langle~f_i^+,f_j^+\rangle|^2+\sum|\langle~f_i^+,f_j^+\rangle|^2) & =\sum(\frac{k}{n}\langle~f_i^+,f_i^+\rangle-\frac{k}{n}\langle~f_i^-,f_i^-\rangle)\\
& =\frac{k^2}{n}.
\end{split}
\end{equation*}
\end{proof}
\begin{theorem}
The minimum value of the frame potential for a set of $k(\geq{n})$ unit vectors $\textbf{\textit{F}}=\{x_i\}$ in an n-dimensional Krein 
space is $\frac{k^2}{n}$. This minimum is attained exactly when the vectors form a tight frame for $\textbf{\textit{K}}$.
\end{theorem}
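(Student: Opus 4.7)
The plan is to reduce the problem to two simultaneous applications of the classical Benedetto--Fickus potential bound, one on the Hilbert component $\textbf{\textit{K}}^+$ and one on the anti-Hilbert component $(\textbf{\textit{K}}^-,\langle\cdot,\cdot\rangle')$, and then to optimize the mass split between them. Write $n_{\pm} := \dim \textbf{\textit{K}}^{\pm}$, so that $n_+ + n_- = n$, and let $S^+$ and $S^-$ denote the Hilbertian frame operators of $\{f_i^+\}$ and $\{f_i^-\}$ on these two spaces, respectively, exactly as set up in \S2.1. A routine expansion of $\langle x, Sx \rangle$ evaluated on the frame vectors themselves yields
\[
\sum_{i,j}|\langle f_i^+,f_j^+\rangle|^2 = \mathrm{tr}\bigl((S^+)^2\bigr), \qquad \sum_{i,j}|\langle f_i^-,f_j^-\rangle|^2 = \mathrm{tr}\bigl((S^-)^2\bigr),
\]
so $P_{\textbf{\textit{F}}} = \mathrm{tr}((S^+)^2)+\mathrm{tr}((S^-)^2)$. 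The unit-norm hypothesis $\|f_i\|_J^2 = \|f_i^+\|^2 + \|f_i^-\|'^2 = 1$, summed over $i$, gives the linear constraint $\mathrm{tr}(S^+) + \mathrm{tr}(S^-) = k$.

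Next I would invoke the elementary inequality $\mathrm{tr}(A^2) \geq (\mathrm{tr}\,A)^2/d$ for every positive self-adjoint $A$ on a $d$-dimensional Hilbert space (Cauchy--Schwarz on the eigenvalue vector), with equality iff $A$ is a scalar multiple of the identity. Setting $a := \mathrm{tr}(S^+)$ and $b := \mathrm{tr}(S^-)$, this produces
\[
P_{\textbf{\textit{F}}} \;\geq\; \frac{a^2}{n_+} + \frac{b^2}{n_-}, \qquad a+b = k.
\]
The right-hand side is strictly convex in $(a,b)$ on the line $a+b=k$, so a single Lagrange condition $a/n_+ = b/n_-$ pins down the minimizer $a = kn_+/n,\ b = kn_-/n$, with minimal value $k^2/n$. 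Chaining the two inequalities then gives $P_{\textbf{\textit{F}}} \geq k^2/n$.

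For the equality analysis, both trace-square bounds must be simultaneously saturated, forcing $S^+ = (k/n)\,I_{\textbf{\textit{K}}^+}$ and $S^- = (k/n)\,I_{\textbf{\textit{K}}^-}$. Reading this back through the defining inequalities (1) and (2), it says exactly that $\{f_i\}$ is a tight frame in the sense of Definition~2.4 with $D = k/n$ and $C = -D$. The converse direction, that such a tight frame attains $k^2/n$, is already Lemma~3.12 above, so no further work is required.

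The step I expect to be subtlest is the equality case, since it demands that the two Cauchy--Schwarz equalities on $\textbf{\textit{K}}^{\pm}$ be compatible with the Lagrange equality case of the outer one-dimensional optimization. What makes this compatibility automatic is precisely the Krein tight-frame definition's insistence that the two bounds have the same absolute value $k/n$ (the identity $C = -D$), which is exactly calibrated to this geometry; without that sign-symmetric normalization the two pieces would decouple and the minimum could be lowered by shifting mass between $\textbf{\textit{K}}^+$ and $\textbf{\textit{K}}^-$.
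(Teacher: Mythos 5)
Your proposal is correct and follows essentially the same route as the paper: both reduce the potential to $\mathrm{tr}\bigl((S^+)^2\bigr)+\mathrm{tr}\bigl((S^-)^2\bigr)$ under the trace constraint $\mathrm{tr}(S^+)+\mathrm{tr}(S^-)=k$ and conclude by Cauchy--Schwarz on the eigenvalues, with equality forcing $S^{\pm}=\tfrac{k}{n}I$, i.e.\ a tight frame. The only cosmetic difference is that you apply $\mathrm{tr}(A^2)\geq(\mathrm{tr}\,A)^2/d$ separately on $\textbf{\textit{K}}^+$ and $\textbf{\textit{K}}^-$ and then optimize the split $a+b=k$, whereas the paper applies Cauchy--Schwarz once to the concatenated eigenvalue vector of length $n$.
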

\begin{proof}
From the preceding lemma, we know that if $\textbf{\textit{F}}$ is a tight frame then its associated frame potential is 
$\frac{k^2}{n}$. It remains to be shown that $\frac{k^2}{n}$ is a lower bound on the set of achievable frame potentials 
among such collection $\textbf{\textit{F}}$, and that every collection attaining this lower bound is a tight frame.

		Let $S_1$ and $S_2$ be the frame operator for $\textbf{\textit{F}}=\{x_i\}$ and let $\lambda_1^+,\ldots~,\lambda_n^+$ and $\lambda_1^-,\ldots~,\lambda_n^-$ be the two sets of eigenvalues of $S_1$ and $S_2$ respectively, counting multiciplity.
		Given $\theta_1$ and $\theta_2$ the analysis operators of $\textbf{\textit{F}}$, the Grammian operator is $G_1=\theta_1\theta_1^\ast$ and $G_2=\theta_2\theta_2^\ast$. The matrix ${G_1}^2$ has diagonal entries $d_{ii}=\sum|\langle~x_i^+,x_j^+\rangle|^2$ and the matrix ${G_2}^2$ has diagonal entries $d_{jj}^2=\sum|\langle~x_j^-,x_i^-\rangle|^2$.\\
		So we see that the frame potential
\begin{equation*}
\begin{split}
		P_\textbf{\textit{F}} & =tr(G_1^2)+tr(G_2^2)\\
		& =tr(S_1^2)+tr(S_2^2)\\
		& =\sum{\lambda_{i}^{+}}^{2}+\sum{\lambda_{i}^{-}}^{2}\\
		& =\sum({\lambda_{i}^{+}}^{2}+{\lambda_{i}^{-}}^{2}).
\end{split}
\end{equation*}
		So we have to minimize the quantity $\sum({\lambda_{i}^{+}}^{2}+{\lambda_{i}^{-}}^{2})$ subject to the constraint $\sum(\lambda_{i}^{+}+\lambda_i^-)=k$ \\
		(The diagonal elements of the Grammian matrix $G_1$ and $G_2$ are squared norms $\|x_i^+\|^2$ and $\|x_i^-\|^2$ respectively)\\
By letting $x=(\lambda_1^+,\ldots~,\lambda_m^+,\lambda_1^-,\ldots~,\lambda_{k-m}^-)$ and
		$y=(1,\ldots~,1,\ldots~,1)$ we have,
		$k=|\langle~x,y\rangle|~\leq~\|x\|\|y\|=\sqrt{P_\textbf{\textit{F}}}\sqrt{n}$.
		
		where the equality holds iff x is in the span of y. Therefore, the frame potential $P_\textbf{\textit{F}}$ has minimum value $\frac{k^2}{n}$ which is attained iff the eigenvalues are all equal in absolute value i.e. $\lambda_i^+=\frac{k}{n}$ and $\lambda_i^-=\frac{k}{n}$. So $S_1=\frac{k}{n}I$ and $S_2=\frac{k}{n}I$, hence $\textbf{\textit{F}}$ is a tight frame for $\textbf{\textit{K}}$. $~~~~~\square$
\end{proof}
\begin{definition}
Let $\textbf{\textit{K}}$ be a Krein space of dimension n. Let $\textbf{\textit{F}}=\{f_i\}_{i=1}^\mathbb{N}$ be a collection of unit vectors in $\textbf{\textit{K}}$, then the sequence $\textbf{\textit{F}}$ is said to be a FF-critical sequence if the following equations are satisfied
\begin{equation*}
S_1(f_i^+)=\lambda_i^+{f_i^+}\textmd{ and~}S_2(f_i^-)=\lambda_i^-{f_i^-}
\end{equation*}
where $S_1$ and $S_2$ are bijective, self-adjoint and positive operator described before. Also $\lambda_i^+,~ \lambda_i^+\in{\mathbb{R}}$.
\end{definition}
\begin{theorem}
A finite sequence of unit vectors $\{f_n\}$ is FF-critical in a Krein space $\textbf{\textit{K}}$ iff both the sequences $\{f_n^+\}$ and $\{f_n^-\}$ may be partitioned into a collection of mutually orthogonal sequences, each of which is a tight frame for its span.

Furthermore, the partition to be chosen explicitly to be $\{E_{\lambda^{+}}\}$ and $\{E_{\lambda^{-}}\}$, where $E_{\lambda^{+}}=\{f_n^+:S_1f_n^+=\lambda^+f_n^+\}$ and $E_{\lambda^{-}}=\{f_n^+:S_2f_n^-=\lambda^-f_n^-\}$. Also the frame constant of $E_{\lambda^{+}}$ is $\lambda^+$ and that of $E_{\lambda^{-}}$ is $\lambda^-$ and the spans of the $\{E_{\lambda^{+}}\}$ and $\{E_{\lambda^{-}}\}$ are precisely the non-zero eigenspaces of $S_1$ and $S_2$ respectively.
\end{theorem}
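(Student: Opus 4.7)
The plan is to decouple the two components of the fundamental decomposition and reduce the statement to two independent Hilbert space assertions, each of which is (modulo a sign convention on $\textbf{\textit{K}}^-$) the classical Benedetto--Fickus characterization of frame-force critical points. Since the definition of FF-critical splits cleanly into the two conditions $S_1 f_n^+=\lambda_n^+ f_n^+$ and $S_2 f_n^-=\lambda_n^- f_n^-$, and since $\textbf{\textit{K}}^+$ and $(\textbf{\textit{K}}^-,\langle\cdot\rangle')$ are both genuine Hilbert spaces on which $S_1,S_2$ are bijective, self-adjoint, positive operators, the whole argument can be carried out in parallel on the two factors.

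For the forward direction I would fix a value $\lambda^+$ in the spectrum of $S_1$ and form $E_{\lambda^+}=\{f_n^+:S_1f_n^+=\lambda^+ f_n^+\}$. Self-adjointness of $S_1$ on $\textbf{\textit{K}}^+$ immediately forces eigenvectors from different eigenvalues to be orthogonal, which delivers the mutual orthogonality of the pieces. To see that $E_{\lambda^+}$ is a tight frame for its span, I would pick $x\in\mathrm{span}(E_{\lambda^+})$ and expand
\begin{equation*}
\lambda^+ x = S_1 x = \sum_{n}\langle x,f_n^+\rangle f_n^+;
\end{equation*}
by the orthogonality just noted the terms with $f_n^+\notin E_{\lambda^+}$ drop out, leaving exactly the tight frame identity with constant $\lambda^+$. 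Bijectivity of $S_1$ rules out $\lambda^+=0$, and running the same computation on any $x$ in the $\lambda^+$-eigenspace of $S_1$ shows that $x$ already lies in $\mathrm{span}(E_{\lambda^+})$, identifying each $\mathrm{span}(E_{\lambda^+})$ with the corresponding full eigenspace and accounting for the ``furthermore'' assertion.

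For the converse, suppose $\{f_n^+\}$ has been partitioned into mutually orthogonal tight frames $\{F_\alpha\}$, with $F_\alpha$ tight for $\mathrm{span}(F_\alpha)$ with constant $\lambda_\alpha$. For any $f_m^+\in F_\alpha$, mutual orthogonality gives $\langle f_m^+,f_n^+\rangle=0$ whenever $f_n^+\notin F_\alpha$, so
\begin{equation*}
S_1 f_m^+ = \sum_{f_n^+\in F_\alpha}\langle f_m^+,f_n^+\rangle f_n^+ = \lambda_\alpha f_m^+,
\end{equation*}
where the last equality uses the tight frame identity in $\mathrm{span}(F_\alpha)$ applied to $f_m^+$ itself. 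Hence each $f_m^+$ is an eigenvector of $S_1$, which is the FF-critical condition on the positive part. Replacing $(\textbf{\textit{K}}^+,S_1,\langle\cdot\rangle)$ by $(\textbf{\textit{K}}^-,S_2,\langle\cdot\rangle')$ and repeating the arguments verbatim handles the negative part.

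The only delicate point, and the step I expect to require most care, is the bookkeeping when transferring between the Krein inner product on $\textbf{\textit{K}}^-$ and the genuine Hilbert inner product $\langle\cdot\rangle'=-\langle\cdot\rangle$: one must verify that the frame operator $S_2$ defined via $\langle\cdot\rangle'$ is the same operator that appears in the definition of FF-critical, and that ``tight frame for its span'' retains its meaning (with the appropriate sign of the constant $\lambda^-$) when the span is viewed as an anti-Hilbert subspace of $\textbf{\textit{K}}$. Once this identification is made explicit, the eigenspace orthogonality, the tight frame identities, and the exhaustion of eigenspaces by spans of the pieces all transport without change, completing the proof.
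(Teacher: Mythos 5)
Your proposal is correct and follows essentially the same route as the paper's own proof: split into the two Hilbert-space components, use self-adjointness of $S_1$ and $S_2$ for mutual orthogonality of the eigenvector classes, and exploit the vanishing of cross terms to identify the full frame operator with the sub-frame operator on each span (and reverse this computation for the converse). Your added remarks on the sign bookkeeping for $\textbf{\textit{K}}^-$ and on identifying $\mathrm{span}(E_{\lambda^+})$ with the full eigenspace are sound and in fact slightly more careful than the paper, which asserts the eigenspace claim in the statement without proving it explicitly.
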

\begin{proof}
$(\Rightarrow)$  Assuming $\{f_n\}$ is a FF-critical sequence, define $E_{\lambda^{+}}=\{f_n^+:S_1f_n^+=\lambda^+f_n^+\}$. Since each $f_n^+$ is in one of the $E_{\lambda^{+}}$ by assumption, then $\{E_{\lambda^{+}}\}$ is a partition of $\{f_n^+\}$. Similarly $\{E_{\lambda^{-}}\}$ is a partition of $\{f_n^-\}$. To show that the members of $\{E_{\lambda^{+}}\}$ are mutually orthogonal, take any $f_{n_1}^+$, $f_{n_2}^+$ in $E_{\lambda_{1}^{+}}$, $E_{\lambda_{2}^{+}}$, respectively. Since $S_1$ is self-adjoint, so $\lambda_{1}^{+}\neq{\lambda_{2}^{+}}$ implies $\langle{f_{n_1}^+},f_{n_2}^+\rangle=0$. By similar arguments we can show that the members of $\{E_{\lambda^{-}}\}$ are also mutually orthogonal.

To show that $E_{\lambda_{1}^{+}}$ is a finite tight frame for $span{E_{\lambda_{1}^{+}}}$ with frame constant $\lambda_{1}^{+}$, take any $y^+\in{span{E_{\lambda_{1}^{+}}}}$. Then $S_1y^+=\lambda_1^+y^+$, and so $\langle{y^+},f_{n}^+\rangle=0$ for any $f_{n}^+\neq{E_{\lambda_{1}^{+}}}$. Thus, letting $S_{\lambda_{1}^{+}}$ denote the frame operator for the Bessel sequence $E_{\lambda_{1}^{+}}$, we have
\begin{equation*}
\begin{split}
S_{\lambda_{1}^{+}}y^+ & =\sum_{f_{n}^+\in{E_{\lambda_{1}^{+}}}}\langle{y^+},f_{n}^+\rangle{f_{n}^+}\\
& =\sum_{n}\langle{y^+},f_{n}^+\rangle{f_{n}^+}\\
& =S_1y^+\\
& =\lambda_{1}^{+}y^+\\
\end{split}
\end{equation*}
By similar arguments we can also show that $E_{\lambda_{1}^{-}}$ is also a finite tight frame for the $span{E_{\lambda_{1}^{-}}}$ with frame constant $\lambda_{1}^{-}$.\\\\
$(\Leftarrow)$  Now assume that $\{f_n\}$ is a normalized sequence s.t. $\{f_n^+\}$ and $\{f_n^-\}$ both partitioned into a collection of mutually orthogonal sequences, each of which is a tight frame for its span. Denote the partitions by $\{E_{\gamma^+}\}_{\gamma^+\in{I^+}}$ and $\{E_{\gamma^-}\}_{\gamma^-\in{I^-}}$, where $I^+$ and $I^-$ are appropriate index set. Take any $f_{m}^+\in{E_{\gamma^+}}$, where $E_{\gamma^+}$ is a finite tight frame for its span with frame constant $\gamma^+$ and the frame operator ${S_{1}}_{\gamma^+}$. Then since $f_{m}^+\in{spanE_{\gamma^+}}$, and since $\langle{f_{m}^+},f_{n}^+\rangle=0$ for $f_{n}^+\notin{E_{\gamma^+}}$ by assumption, we have
\begin{equation*}
\begin{split}
\gamma^+f_m^+ & ={S_{1}}_{\gamma^+}f_m^+\\
& =\sum_{f_{n}^+\in{E_{\lambda_{1}^{+}}}}\langle{f_m^+},f_{n}^+\rangle{f_{n}^+}\\
& =\sum_{n}\langle{f_m^+},f_{n}^+\rangle{f_{n}^+}\\
& =S_1f_m^+.
\end{split}
\end{equation*}
Thus each $f_m^+$ is an eigenvalue for $S_1$.\\

Similarly, we can show that each $f_m^-$ is an eigenvector of $S_2$, and therefore $\{f_n\}$ is a FF-critical sequence. $~~~~~\square$
\end{proof}
{\bf Acknowledgement.} SMH and SK gratefully acknowledge the support of Aliah University, Kolkata for providing all the facilities when the manuscript was prepared. SK also acknowledges the financial support of CSIR, Govt. of India.

\bibliographystyle{amsplain}

\end{document}